\title{A simplex algorithm for rational cp-factorization}
\author{Mathieu Dutour Sikiri\'c} 
\address{M. Dutour Sikiri\'c, Rudjer Boskovi\'c Institute, Bijenicka 54, 10000 Zagreb, Croatia}
\email{mdsikir@irb.hr}
\author{Achill Sch\"urmann} 
\address{A. Sch\"urmann, Institute of Mathematics, University of Rostock, 18051 Rostock, Germany}
\email{achill.schuermann@uni-rostock.de}
\author{Frank Vallentin} 
\address{F. Vallentin, Mathematisches Institut, Universit\"at zu K\"oln, Weyertal 86--90, 50931 K\"oln, Germany}
\email{frank.vallentin@uni-koeln.de}
\subjclass[2010]{90C20, 11H50, 11H55}
\keywords{copositive programming, complete
  positivity, matrix factorization, copositive minimum}
\date{January 25, 2020}
\newtheorem{defi}{Definition}[section]
\newtheorem{definition}[defi]{Definition}
\newtheorem{theorem}[defi]{Theorem}
\newtheorem{lemma}[defi]{Lemma}
\newtheorem{conj}[defi]{Conjecture}
\newcommand{\Z}{{\mathbb{Z}}} 
\newcommand{\Q}{{\mathbb{Q}}}
\newcommand{\R}{{\mathbb{R}}} 
\newcommand{\MV}{{\mathcal{V}}} 
\DeclareMathOperator{\minC}{min_{\mathcal{COP}}}
\DeclareMathOperator{\MinC}{Min_{\mathcal{COP}}}
\DeclareMathOperator{\Trace}{Trace}
\DeclareMathOperator{\cone}{cone}
\DeclareMathOperator{\conv}{conv}
\DeclareMathOperator{\bd}{bd}
\DeclareMathOperator{\interior}{int}
\DeclareMathOperator{\Id}{Id}
\DeclareMathOperator{\rank}{rank}
\newcommand{\sn}{{\mathcal S}^{n}} 
\newcommand{\sngeo}{{\mathcal S}^{n}_{\geq 0}} 
\newcommand{\sngo}{{\mathcal S}^{n}_{>0}}
\newcommand{\cop}{{\mathcal{COP}}} 
\newcommand{\copn}{{\mathcal{COP}}^{n}} 
\newcommand{\cpn}{{\mathcal{CP}}^{n}} 
\newcommand{\cptwo}{{\mathcal{CP}}^{2}} 
\newcommand{\cpnrat}{\widetilde{\mathcal{CP}}^{n}}
\newcommand{\cprat}{\widetilde{\mathcal{CP}}}
\newcounter{alg}
\newenvironment{bigalg}{\medskip%
                        \begin{figure}[htbp]%
                        \refstepcounter{alg}%
                        \begin{center}}%
                       {\end{center}\end{figure}\medskip}
\begin{document}

\begin{abstract} 
  In this paper we provide an algorithm, similar to the simplex
  algorithm, which determines a rational cp-factorization of a given
  matrix, whenever the matrix allows such a factorization. This
  algorithm can be used to show that every integral completely positive
  $2 \times 2$ matrix has an integral cp-factorization.
\end{abstract}

\maketitle

\markboth{M.~Dutour Sikiri\'c, A. Sch\"urmann, and F.~Vallentin}{A
  simplex algorithm for rational cp-factorization}

\section{Introduction}

Copositive programming gives a common framework to formulate many
difficult optimization problems as convex conic ones.  In fact, many
$\textrm{NP}$-hard problems are known to have such reformulations~(see
for example the surveys \cite{Bomze2012a,Duer2010a}).  All the
difficulty of these problems appears to be ``converted'' into the
difficulty of understanding the cone of copositive matrices $\copn$
which consists of all symmetric $n \times n$ matrices
$B \in \mathcal{S}^n$ with $x^{\sf T} B x \geq 0$ for all
$x \in \mathbb{R}^n_{\geq 0}$. Its dual cone is the cone
\[
\begin{split}
\cpn
\; = \; & \cone\{xx^{\sf T} : x \in \mathbb{R}^n_{\geq 0}\}\\
 \; = \; & \left\{\sum_{i=1}^m \alpha_i x_i x_i^{\sf T} : m \in \mathbb{N},
\alpha_i \in \mathbb{R}_{\geq 0}, x_i \in \mathbb{R}^n_{\geq
  0}, i = 1, \ldots, m\right\}
\end{split}
\] 
of completely positive $n\times n$ matrices. Therefore, it seems no
surprise that many basic questions about this cone are still open and
appear to be very difficult.

One important problem is to find an algorithmic test deciding whether
or not a given symmetric matrix $A$ is completely positive. If
possible one would like to obtain a certificate for either
$A \in \cpn$ or $A \not \in \cpn$. 
Dickinson and Gijben
\cite{Dickinson2014a} showed that this (strong) membership problem is
$\textrm{NP}$-hard.

In terms of the definitions the
most natural certificate for $A \in \cpn$ is giving a cp-factorization
\begin{equation} 
\label{eqn:cp-factorization}
  A = \sum_{i=1}^m \alpha_i x_i x_i^{\sf T} \quad \text{with}
\quad
  m \in \mathbb{N},
\alpha_i \in \mathbb{R}_{\geq 0}, \; x_i \in \mathbb{R}^n_{\geq
  0}, \; i = 1, \ldots, m.
\end{equation} 
For $A \not\in \cpn$ it is natural to give a separating hyperplane defined by
a matrix $B \in \copn$ so that the inner product of $A$ and $B$ satisfies
$\langle B, A \rangle < 0$. 

From the algorithmic side, different ideas have been proposed.  One
can divide the relevant literature according to two complementary
approaches:
\begin{enumerate}
\item Numerical methods which are practical but ``only'' can
find approximate cp-factorizations. The papers by  Jarre and Schmallowsky
\cite{Jarre2009a}, Nie \cite{Nie2014a}, Sponsel and D\"ur
\cite{Sponsel2014a}, Elser~\cite{Elser2017a}, Groetzner and D\"ur
\cite{Groetzner2018a} fall into this category.
\item Theoretical methods which can compute exact cp-factorizations in
finitely many algorithmic steps. The factorization method of
Anstreicher, Burer, and Dickinson \cite[Section 3.3]{Dickinson2013a}
uses the ellipsoid method and works for all matrices which have a
rational cp-factorization and lie in the interior of the cone $\cpn$.
Berman and Rothblum \cite{Berman2006a} use quantifier elimination for
first order formulae over the reals to compute the $\mathcal{CP}$-rank of a
given matrix, that is, the minimum number $m$ of vectors used in a 
cp-factorization \eqref{eqn:cp-factorization}.
\end{enumerate}

In this paper, in Section~\ref{sec:algorithm}, we describe a new
procedure that is based on pivoting like the simplex algorithm. To
define the pivoting we apply the notion of the copositive minimum
which we introduce in Section~\ref{sec:copmin}. Our algorithm
(Algorithm~\ref{alg:new-voronoi-algorithm}) works for all matrices in
the rational cone
\[
\begin{split}
\cpnrat \; = \; & \cone_{\mathbb{Q}}\{xx^{\sf T} : x \in \mathbb{Q}^n_{\geq
  0}\} \\
 \; = \; & \left\{\sum_{i=1}^m \alpha_i x_i x_i^{\sf T} : m \in \mathbb{N},
\alpha_i \in \mathbb{Q}_{\geq 0}, x_i \in \mathbb{Q}^n_{\geq
  0}, i = 1, \ldots, m\right\}.
\end{split}
\]
Moreover, we conjecture that a variant of our algorithm
(Procedure~\ref{alg:new-extended-procedure}) always computes
separating hyperplanes, if the input matrix is not completely
positive. Overall, our procedure works for matrices with coefficients
in any computable subfield $F$ of the real numbers, in that case the
coefficients $\alpha_i$ of the formula above belong to $F_{\geq 0}$ and
the whole algorithmic procedure works similarly as the rational case
that we consider in this paper.

Our algorithm uses rational numbers only if the input matrix is
rational and so allows in principle exact computations.  As a
consequence, to the best of our knowledge, our algorithm is currently
the only one that can find a rational cp-factorization whenever it
exists.  In \cite{Dickinson2013a} a similar result was obtained, but
restricted to matrices in the interior of $\cpn$. A related question
is if every rational completely positive matrix has a rational
cp-factorization, see the survey \cite{Shaked2018a}. Generally we do not know
but from the results in~\cite{Dickinson2013a} and \cite{Dutour2017a} it follows
that this is true for matrices in the interior of $\cpn$.

If the input matrix $A$ is integral, one can also ask if it admits an
integral cp-factorization, i.e.\ a cp-factorization of the form
$A = \sum_{i = 1}^m x_i x_i^{\sf T}$ with
$x_i \in \mathbb{Z}_{\geq 0}^n$ for all $i = 1, \ldots, m$. For $n \geq 3$
it is known that there are integral matrices $A \in \cpn$ which do not
have an integral cp-factorization, see~\cite[Theorem
6.4]{Berman2018a}. For $n = 2$ it was conjectured by Berman and
Shaked-Monderer \cite[Conjecture 6.13]{Berman2018a} that every
integral matrix $A \in \cptwo$ possesses an integral
cp-factorization. This conjecture was recently proved by Laffey and
\v{S}imgoc \cite{Laffey2018a}. In Section~\ref{sec:2dexample} we show
that our simplex algorithm can be used to give a short, alternative
proof of this result.

In Section~\ref{sec:experiments} we describe how an implementation of
our algorithm performs on some examples.

\section{The copositive minimum and copositive perfect matrices}
\label{sec:copmin}

\subsection{Copositive minimum}

By $\sn$ we denote the Euclidean vector space of symmetric $n\times n$
matrices with inner product
$\langle A, B \rangle = \Trace(AB) = \sum_{i,j=1}^n
A_{ij}B_{ij}$. With respect to this inner product we have the
following duality relations between the cone of copositive matrices
and the cone of completely positive matrices
\[
\copn = (\cpn)^* = \{B \in \mathcal{S}^n : \langle A, B \rangle \geq 0 \text{ for all } A \in \cpn \},
\]
and
\[
\cpn = (\copn)^*.
\]
So, in order to show that a given symmetric matrix $A$ is not completely
positive, it suffices to find a copositive matrix $B \in \copn$ with
$\langle B, A\rangle < 0$. We call $B$ a \emph{separating witness} for
$A \not\in \cpn$ in this case, because the linear hyperplane orthogonal to $B$
separates $A$ and $\cpn$.

Using the notation $B[x]$ for
$x^{\sf T} B x = \langle B, xx^{\sf T} \rangle$, we obtain
\[
\copn = \{B \in \mathcal{S}^n :  B[x] \geq 0 \text{ for all } x\in \mathbb{R}^n_{\geq 0}\}.
\]
Obviously, the cone of positive semidefinite matrices $\sngeo$, whose interior
is the open cone of positive definite matrices $\sngo$, lies between the
completely positive cone and the copositive cone: 
$\cpn \subseteq \sngeo \subseteq \cop^n$.

\begin{definition}
For a symmetric matrix~$B\in \sn$ we define the
\emph{copositive minimum} as 
\[
\minC (B) = \inf \left\{ B[v]  :  v \in \Z^n_{\geq 0}\setminus\{0\} \right\},
\]
and we denote the set of vectors attaining it by
\[
\MinC (B) = \left\{ v \in \Z^n_{\geq 0} :  B[v] = \minC(B) \right\}.
\]
\end{definition}

The following proposition shows that matrices in the interior of the
cone of copositive matrices attain their copositive minimum.

\begin{lemma}
\label{lem:mincop}
  Let $B$ be a matrix in the interior of the cone of copositive
  matrices. Then the copositive minimum of $B$ is strictly positive
  and it is attained by only finitely many vectors.
\end{lemma}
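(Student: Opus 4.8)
The plan is to exploit the fact that a matrix $B$ in the interior of $\copn$ is \emph{strictly} copositive, meaning $B[x] > 0$ for every nonzero $x \in \R^n_{\geq 0}$. First I would establish this strictness: if $B[x_0] = 0$ for some $x_0 \in \R^n_{\geq 0} \setminus \{0\}$, then for any symmetric perturbation $C$ with small norm, $(B - \varepsilon C)[x_0] = -\varepsilon\, C[x_0]$, which is negative whenever $C[x_0] > 0$ (e.g.\ $C = x_0 x_0^{\sf T}$), so $B - \varepsilon C \notin \copn$ for all $\varepsilon > 0$; this contradicts $B$ being interior. Hence $B[x] > 0$ on the nonzero part of the nonnegative orthant.

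Next, I would use a compactness argument to get a uniform lower bound. The set $\Delta = \{x \in \R^n_{\geq 0} : \sum_i x_i = 1\}$ is compact, and $x \mapsto B[x]$ is continuous and, by the previous paragraph, strictly positive on $\Delta$; therefore there is a constant $c > 0$ with $B[x] \geq c$ for all $x \in \Delta$. By homogeneity, $B[x] \geq c \,(\sum_i x_i)^2$ for every $x \in \R^n_{\geq 0}$. In particular, for any $v \in \Z^n_{\geq 0} \setminus \{0\}$ we have $\sum_i v_i \geq 1$, so $B[v] \geq c > 0$; taking the infimum over such $v$ gives $\minC(B) \geq c > 0$, which proves strict positivity of the copositive minimum.

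For finiteness of $\MinC(B)$, I would note that $B[v] = \minC(B) \leq B[e_j]$ for each standard basis vector $e_j$ (each $e_j \in \Z^n_{\geq 0}\setminus\{0\}$), so $\minC(B)$ is bounded above by $M := \max_j B_{jj}$. Then any $v \in \MinC(B)$ satisfies $c\,(\sum_i v_i)^2 \leq B[v] = \minC(B) \leq M$, hence $\sum_i v_i \leq \sqrt{M/c}$. Since $v$ is a nonnegative integer vector, this bounds each coordinate, confining $\MinC(B)$ to a finite set.

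The main obstacle — though it is more a point requiring care than a genuine difficulty — is the first step: correctly deducing strict copositivity from membership in the interior, and in particular choosing the perturbation direction so that the perturbed matrix genuinely leaves $\copn$. Once strictness is in hand, the compactness/homogeneity argument and the finiteness bound are routine. One should also take care that the infimum in the definition of $\minC(B)$ is actually attained (which follows from the coordinate bound derived for $\MinC(B)$), so that $\MinC(B)$ is nonempty and the two conclusions are consistent.
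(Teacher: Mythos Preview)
Your proof is correct. Both your argument and the paper's rest on the same two ingredients --- strict positivity of $B[\cdot]$ on the nonzero part of the orthant (from $B\in\interior(\copn)$) and compactness of a bounded slice of $\R^n_{\geq 0}$ --- but the execution differs. The paper argues by contradiction: assuming $\minC(B)=0$, it takes a sequence $v_i\in\Z^n_{\geq 0}\setminus\{0\}$ with $B[v_i]\to 0$, normalizes $u_i=v_i/\|v_i\|$ onto the compact set $\R^n_{\geq 0}\cap S^{n-1}$, passes to a convergent subsequence with limit $u$, observes $\|v_i\|\to\infty$ (distinct lattice points), and concludes $B[u]=0$, a contradiction; finiteness of $\MinC(B)$ is dispatched with the same sequence trick. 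You instead minimize $B[\cdot]$ directly over the compact simplex $\Delta$ to obtain a uniform constant $c>0$, then use homogeneity to get the explicit bound $B[v]\geq c\,(\sum_i v_i)^2$, from which both strict positivity of $\minC(B)$ and a concrete coordinate bound on $\MinC(B)$ fall out immediately. Your route is more constructive and yields quantitative information (the bound $\sum_i v_i\leq\sqrt{M/c}$), while the paper's sequence argument is marginally quicker to write but gives no explicit constants.
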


\begin{proof}
  Since $B$ is copositive, we have the inequality $\minC(B) \geq 0$.
  Suppose that $\minC(B) = 0$.  Then there is a sequence
  $v_i \in \Z_{\geq 0}^n \setminus \{0\}$ of pairwise distinct lattice
  vectors such that $B[v_i]$ tends to zero when $i$ tends to infinity.
  From the sequence $v_i$ we construct a new sequence $u_i$ of vectors
  on the unit sphere $S^{n-1}$ by setting $v_i = \|v_i\| u_i$. The sequence
  $u_i$ belongs to the compact set $\R^n_{\geq 0} \cap S^{n-1}$.
  Thus, by taking a subsequence if necessary, we may assume that $u_i$
  converges to a point $u\in \R^n_{\geq 0} \cap S^{n-1}$.  The
  sequence of norms $\|v_i\|$ tends to infinity since the set of
  lattice vectors of bounded norm is finite. Thus we get
\[
0 = \lim_{i \to \infty} B[v_i] = \lim_{i \to \infty} \|v_i\|^2 B[u_i],
\]
which implies that $B[u] = 0$, contradicting our assumption 
$B \in \interior(\copn)$. Hence, $\minC(B) > 0$.

By the same argument one can show that $\MinC(B)$ only contains
finitely many vectors.
\end{proof}

\subsection{A locally finite polyhedron}
\label{ssec:locallyfinite}

In our previous paper~\cite{Dutour2017a} and in this paper the set
\[
\mathcal{R} = \{B \in \sn : B[v] \geq 1 \text{ for
  all } v \in \mathbb{Z}^n_{\geq 0} \setminus \{0\} \}
\]
plays a central role\footnote{We use the letter $\mathcal{R}$ here because
  Ryshkov used a similar construction in the study of lattice sphere
  packings, see for example~\cite[Chapter 3]{Schuermann2009a}}. The set $\mathcal{R}$ is a locally finite
  polyhedron, meaning that every intersection of $\mathcal{R}$ with a
  polytope is a polytope itself. In \cite[Lemma 2.3]{Dutour2017a} we showed that
$\mathcal{R}$ is contained in the interior of the cone of copositive
matrices. Thus, we can rewrite $\mathcal{R}$ as
\begin{equation} 
\label{eqn:ryshkov-set}
\mathcal{R} = \{B \in \sn : \minC(B) \geq 1 \}.
\end{equation}
Note also that \cite[Lemma 2.3]{Dutour2017a} together with
Lemma~\ref{lem:mincop} implies
\begin{equation}
\label{eqn:coneR}
\cone \mathcal{R} \setminus \{0\} = \interior \cop^n.
\end{equation}

The following theorem gives a tight outer
approximation of the cone of completely positive matrices in terms of
the boundary structure (its $1$-skeleton to be precise) of the convex
set~$\mathcal{R}$. Similarly, Y\i{}ld\i{}r\i{}m \cite{Yildirim2012a}
discusses uniform polyhedral approximations of the the cone of
copositive matrices.

\begin{theorem}
We have
\label{thm:outer-approx}
\begin{equation}
\label{eq:outer-approx}
\begin{split}
\cpn = 
\{ Q\in \sn : \langle Q, B \rangle \geq 0 & \text{ for all vertices
  and for all generators}\\
 & \text{of extreme rays $B$ of $\mathcal{R}$} \}.
\end{split}
\end{equation}
\end{theorem}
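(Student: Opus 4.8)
The plan is to prove the identity $\eqref{eq:outer-approx}$ by showing mutual inclusion of the two cones. Write $\mathcal{C}$ for the right-hand side, the set of $Q \in \sn$ with $\langle Q, B\rangle \geq 0$ for every vertex $B$ of $\mathcal{R}$ and every generator $B$ of an extreme ray of $\mathcal{R}$. The inclusion $\cpn \subseteq \mathcal{C}$ is the easy direction: since $\mathcal{R} \subseteq \interior\copn$ by \cite[Lemma 2.3]{Dutour2017a}, every vertex and every extreme-ray generator $B$ of $\mathcal{R}$ is in particular copositive, so for $Q \in \cpn$ we have $\langle Q, B\rangle \geq 0$ by the duality $\copn = (\cpn)^*$ recalled above. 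Hence $Q \in \mathcal{C}$.

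For the reverse inclusion $\mathcal{C} \subseteq \cpn$, the idea is to pass to the dual description $\cpn = (\copn)^*$ and show that every copositive matrix lies in the closed conic hull of the vertices and extreme-ray generators of $\mathcal{R}$; then $\langle Q, B\rangle \geq 0$ for all those $B$ forces $\langle Q, C\rangle \geq 0$ for all $C \in \copn$, i.e.\ $Q \in (\copn)^* = \cpn$. So the heart of the matter is the geometric claim
\[
\copn \subseteq \cl \cone\{B : B \text{ a vertex or an extreme-ray generator of } \mathcal{R}\}.
\]
By \eqref{eqn:coneR} we have $\cone\mathcal{R} = \interior\copn$, and taking closures gives $\cl\cone\mathcal{R} = \copn$ (the closure of a full-dimensional cone equals the closure of its interior). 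It therefore suffices to show that $\mathcal{R}$ itself is contained in the closed conic hull of its vertices and extreme-ray generators. This is where local finiteness of $\mathcal{R}$ enters: a locally finite polyhedron that contains no line has the property that each of its points is a convex combination of vertices plus a nonnegative combination of extreme-ray generators (the Minkowski--Weyl decomposition applied on each polytopal piece $\mathcal{R} \cap P$, letting $P$ exhaust a nested sequence of polytopes). One must check that $\mathcal{R}$ is line-free, which follows from $\mathcal{R} \subseteq \interior\copn$ and the fact that the copositive cone is pointed. Combining, every $B \in \mathcal{R}$, and hence every element of $\cone\mathcal{R} = \interior\copn$, lies in $\cone\{\text{vertices and extreme-ray generators of }\mathcal{R}\}$; taking closures yields the displayed inclusion.

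The main obstacle I expect is the careful handling of the recession directions: one needs that $\mathcal{R}$, although unbounded, has only finitely many extreme rays through any bounded region and that its recession cone is generated by the extreme-ray generators one is using, so that the Minkowski--Weyl decomposition is uniform enough to survive the limit $P \uparrow \sn$. Equivalently, one should verify that $\cl\cone\{\text{extreme-ray generators of }\mathcal{R}\}$ contains the recession cone of $\mathcal{R}$, which is exactly $\copn$ again via \eqref{eqn:coneR} and a boundary argument — so there is a mild circularity to unwind, best avoided by arguing directly from \eqref{eqn:ryshkov-set}: if $B[v] \geq 1$ for all $v$ and $C$ is copositive, then $B + tC \in \mathcal{R}$ for all $t \geq 0$, exhibiting $C$ as a recession direction of $\mathcal{R}$ and thus (by local finiteness) a nonnegative combination of extreme-ray generators. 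Once this point is settled, the rest is the routine Minkowski--Weyl and duality bookkeeping sketched above.
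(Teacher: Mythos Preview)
Your proposal is correct and follows essentially the same route as the paper, though you unpack considerably more detail. The paper's proof is the three-line chain
\[
\cpn = (\copn)^* = (\interior\copn)^* = (\cone\mathcal{R})^*,
\]
followed by the assertion that local finiteness of $\mathcal{R}$ yields $(\cone\mathcal{R})^* = \mathcal{C}$. Your argument splits into two inclusions and, for the hard one, reduces to the dual statement $\copn \subseteq \cl\cone\{\text{vertices and extreme-ray generators of }\mathcal{R}\}$; this is exactly what the paper's last sentence is invoking without spelling out. The subtleties you flag about recession directions and the Minkowski--Weyl decomposition for line-free locally finite polyhedra are precisely the content hidden in that last sentence, and your resolution (showing directly that the recession cone of $\mathcal{R}$ equals $\copn$ via \eqref{eqn:ryshkov-set}) is sound. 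One small slip: you justify ``$C$ is a nonnegative combination of extreme-ray generators'' by local finiteness, but the relevant fact there is rather that $\copn$ is a closed pointed cone, so Krein--Milman (not polyhedrality) gives the needed conic-hull representation of its elements in terms of its extreme rays.
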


\begin{proof}
We have 
\[
\cpn = (\copn)^* = (\interior(\copn))^* = (\cone \mathcal{R})^*, 
\]
where the identity $K^* = (\interior(K))^*$ is generally true for full
dimensional convex cones and the last identity is~\eqref{eqn:coneR}.
Since $\mathcal{R}$ is a locally finite polyhedron, $(\cone
\mathcal{R})^*$ is equal to the right hand side of~\eqref{eq:outer-approx}.
\end{proof}

\subsection{A linear program for finding a rational cp-factorization}
\label{ssec:linearprogram}

In \cite[Lemma 2.4]{Dutour2017a} we showed that for
$A\in \interior(\cpn)$ and all sufficiently large $\lambda>0$ the set
\[
\mathcal{P}(A,\lambda) = \{B\in \mathcal{R} : \langle A, B \rangle
\leq \lambda\}
\]
is a  full-dimensional polytope.

In principle (cf.~\cite[Proof of Theorem 1.1]{Dutour2017a}), this gives a
way to compute a cp-factorization for a given matrix
$A\in \interior(\cpn)$ by solving the linear program
\begin{equation}
\label{eqn:linear_program}
\min\left\{\langle A, B \rangle : B \in \mathcal{P}(A,\lambda)\right\}:
\end{equation}
This is because the minimum is attained at a vertex $B^*$ of
$\mathcal{P}(A,\lambda)$. Hence, due to the minimality of $\langle
A, B^* \rangle$, the matrix $A$ is contained in the \emph{(inner) normal cone}
\begin{equation}
\label{eq:mvb}
\MV(B^*) =  \cone \left\{vv^{\sf T}  : v \in \MinC B^* \right\}
\end{equation}
of $\mathcal{R}$ at $B^*$.  For a rational matrix $A\in \interior(\cpn)$
we obtain a \emph{rational cp-factorization} in this way, that is, a
decomposition of the form
\begin{equation}
\label{eq:rationalcpfactorization}
A = \sum_{i=1}^m \alpha_i v_i v_i^{\sf T} \quad \text{with } \alpha_i
\in \Q_{\geq 0 }\text{ and } v_i \in \Z^n_{\geq 0}, 
\quad \text{for } i = 1, \ldots, m.
\end{equation}
To find this factorization, we apply Carath\'eodory's theorem (see for
example \cite[Corollary 7.1i]{Schrijver1986a}) and choose a subset
$v_1, \ldots, v_m \in \MinC B^*$ such that $v_iv_i^{\sf T}$ are
linearly independent and
$A \in \cone\{v_i v_i^{\sf T} : i = 1, \ldots, m\}$. So we can find
unique non-negative rational coefficients $\alpha_1, \ldots, \alpha_m$
giving the rational cp-factorization
\eqref{eq:rationalcpfactorization}.

However, for solving the linear program~\eqref{eqn:linear_program} one
needs an explicit finite algorithmic description of the set
$\mathcal{P}(A, \lambda)$, for example by a finite list of linear
inequalities. The proof of the polyhedrality of
$\mathcal{P}(A, \lambda)$ in \cite[Lemma 2.4]{Dutour2017a} relies on
an indirect compactness argument (similar to the one in the proof of
Lemma~\ref{lem:mincop}) which does not yield such an explicit
algorithmic description. In the remainder of this paper we are
therefore concerned with finding a finite list of linear inequalities.

\subsection{Copositive perfect matrices}

In the next step we characterize the vertices of $\mathcal{R}$. The
following definitions and the algorithm in the following section are
inspired by Voronoi's classical algorithm for the classification of
perfect positive definite quadratic forms. These can for instance be
used to classify all locally densest lattice sphere packings (see for
example \cite{Martinet2003a} or \cite{Schuermann2009a}).  In
\eqref{eq:mvb} we use the letter $\MV$ to denote the normal cone of a
vertex, as it is a generalization of the \emph{Voronoi cone} used in
the classical setting. In fact, our generalization of Voronoi's work
can be viewed as an example of a broader framework described by
Opgenorth~\cite{Opgenorth2001}.  In analogy with Voronoi's theory for
positive definite quadratic forms we define the notion of perfectness
for copositive matrices:

\begin{definition}
  A copositive matrix $B \in \interior(\copn)$ is called
  \emph{$\cop$-perfect} if it is uniquely determined by its copositive
  minimum $\minC B$ and the set $\MinC B$ attaining it.
\end{definition}

In other words, $B \in \interior(\copn)$ is $\cop$-perfect if and only
if it is the unique solution $X$ of the system of linear equations
\[
\langle X, vv^{\sf T}\rangle = \minC B, \text{ for all } v \in \MinC B.
\]
Hence, $\cop$-perfect matrices are, up to scaling, exactly the vertices of $\mathcal{R}$.

\begin{lemma}
  $\cop$-perfect matrices exist in all dimensions (dimension $n = 1$ being
  trivial): For dimension $n \geq 2$ the following matrix
\begin{equation}
\label{eqn:initial_perfect_form}
Q_{\mathsf{A}_n}
=
\begin{pmatrix}
\phantom{-}2 & -1 & 0 & \ldots & 0 \\
-1 & \phantom{-}2 & \ddots &  \ddots & \vdots \\
0  & \ddots & \ddots & \ddots & 0 \\
 \vdots &    \ddots &  \ddots &  \phantom{-}2 & -1 \\
 0 & \ldots & 0   &  -1 & \phantom{-}2 \\
\end{pmatrix}
\end{equation}
is $\cop$-perfect; $\frac{1}{2} Q_{\mathsf{A}_n}$
is a vertex of $\mathcal{R}$.
\end{lemma}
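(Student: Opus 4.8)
The plan is to show two things about $B = \frac{1}{2}Q_{\mathsf{A}_n}$: first, that it lies in the interior of $\copn$ and has copositive minimum equal to $1$; and second, that the set $\MinC(B)$ of minimal vectors pins it down uniquely among symmetric matrices, so that it is $\cop$-perfect and hence a vertex of $\mathcal{R}$. Note that $Q_{\mathsf{A}_n}$ is the Gram matrix of the root lattice $\mathsf{A}_n$, realized concretely: if $e_1,\dots,e_{n+1}$ is the standard basis of $\R^{n+1}$ and $b_i = e_i - e_{i+1}$ for $i = 1,\dots,n$, then $\langle b_i, b_j\rangle$ gives exactly the entries of $Q_{\mathsf{A}_n}$. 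So for a nonnegative integer vector $v = (v_1,\dots,v_n)^{\sf T}$ we have $Q_{\mathsf{A}_n}[v] = \|\sum_i v_i b_i\|^2 = \|(v_1, v_2 - v_1, \dots, v_n - v_{n-1}, -v_n)\|^2$; writing $w_0 = v_1$, $w_i = v_{i+1} - v_i$ for $1 \le i \le n-1$, and $w_n = -v_n$, this is $\sum_{k=0}^n w_k^2$ with $\sum_k w_k = 0$.

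First I would establish $\minC(B) = 1$ with $B[v] \ge 1$ for all $v \in \Z^n_{\ge 0}\setminus\{0\}$, i.e.\ $Q_{\mathsf{A}_n}[v] \ge 2$. From the expression above, $Q_{\mathsf{A}_n}[v] = \sum_{k=0}^n w_k^2$ is a sum of integer squares; it cannot be $0$ unless all $w_k = 0$, which forces $v = 0$; and it cannot be $1$ since a single $\pm 1$ among the $w_k$ cannot have zero sum (there must be at least one positive and one negative entry, or all zero). Hence $Q_{\mathsf{A}_n}[v] \ge 2$ whenever $v \ne 0$. Equality $Q_{\mathsf{A}_n}[v] = 2$ is attained: taking $v = e_i + e_{i+1} + \dots + e_j$ (a ``run'' of consecutive ones from position $i$ to $j$, for $1 \le i \le j \le n$) gives $w$ with exactly one $+1$ and one $-1$, so $Q_{\mathsf{A}_n}[v] = 2$. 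So $\minC(B) = 1$ and, along the way, one sees $B$ is copositive with strictly positive copositive minimum; since $Q_{\mathsf{A}_n}$ is positive definite (a Gram matrix of independent vectors), $B \in \sngo \subseteq \interior(\copn)$, so Lemma~\ref{lem:mincop} applies and $\MinC(B)$ is finite.

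Next I would identify $\MinC(B)$ precisely and show it determines $B$. The minimal vectors are exactly the runs $r_{ij} = e_i + \dots + e_j$ described above (one checks no other nonnegative $v$ gives $\sum w_k^2 = 2$: two nonzero entries of absolute value $1$ summing to zero means one $+1$, one $-1$ among the $w_k$, and back-translating, $v$ is the indicator vector of an interval $[i,j]$). Now I want the linear system $\langle X, r_{ij} r_{ij}^{\sf T}\rangle = 1$ over all $1 \le i \le j \le n$ to have $B$ as its unique solution in $\sn$. Since $\dim \sn = \binom{n+1}{2}$ and there are exactly $\binom{n+1}{2}$ runs, it suffices to check that the matrices $r_{ij} r_{ij}^{\sf T}$ span $\sn$ — equivalently are linearly independent. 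This is the step I expect to be the main obstacle, though it is a clean linear-algebra fact: one can argue by a triangularity/induction on the interval length, peeling off $r_{ii} = e_i$ (giving the diagonal entries) and then $r_{i,i+1}$, etc., so that the Gram-type matrix of the $r_{ij}$'s against a dual family is invertible. Once independence is in hand, $B$ is the unique $X \in \sn$ with $\langle X, vv^{\sf T}\rangle = \minC B$ for all $v \in \MinC B$, so $B$ is $\cop$-perfect by the characterization stated just above; and as noted there, $\cop$-perfect matrices are precisely the vertices of $\mathcal{R}$ (up to scaling), so $\frac{1}{2}Q_{\mathsf{A}_n} = B$ is a vertex of $\mathcal{R}$. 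The case $n = 1$ is $Q_{\mathsf{A}_1} = (2)$, with $\minC(\tfrac12 Q_{\mathsf{A}_1}) = 1$ attained at $v = 1$, which trivially determines the matrix.
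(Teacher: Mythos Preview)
Your proposal is correct and follows essentially the same approach as the paper: both write $Q_{\mathsf{A}_n}[v]$ as the sum of squares $v_1^2 + \sum_{i=1}^{n-1}(v_i - v_{i+1})^2 + v_n^2$ (your $\sum_k w_k^2$), deduce positive definiteness and hence membership in $\interior(\copn)$, identify $\MinC$ as the $\binom{n+1}{2}$ ``runs'' $\sum_{i=j}^k e_i$, and then observe that the corresponding rank-$1$ matrices span $\sn$. Your write-up is slightly more careful than the paper's in justifying why no other nonnegative integer vector achieves the minimum (via the constraint $\sum_k w_k = 0$) and in sketching the triangularity argument for linear independence, which the paper simply declares ``easy to see''.
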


The matrix $Q_{\mathsf{A}_n}$ is also known as a Gram matrix of the
root lattice $\mathsf{A}_n$, a very important lattice, for instance in the theory of
sphere packings (see for example~\cite{Conway1988a}).

\begin{proof}
The matrix $Q_{\mathsf{A}_n}$ is positive definite since 
\[
Q_{\mathsf{A}_n}[x] = x_1^2 + \sum_{i=1}^{n-1} (x_i - x_{i+1})^2 + x_n^2 
\]
is a sum of squares and $Q_{\mathsf{A}_n}[x] = 0$ if and only
if $x = 0$. Thus $Q_{\mathsf{A}_n}$ lies in the interior of the copositive
cone. Furthermore,
\[
\minC Q_{\mathsf{A}_n} = 2
\quad
\text{with}
\quad
\MinC Q_{\mathsf{A}_n} = \left\{\sum_{i=j}^k e_i : 1 \leq j \leq k
  \leq n\right\},
\]
where $e_i$ is the $i$-th standard unit basis vector of
$\mathbb{R}^n$. Thus, the ${n+1 \choose 2}$ vectors
attaining the copositive minimum have a continued sequence of $1$s in
their coordinates and $0$s otherwise. Now it is easy to see that the
rank-$1$-matrices
\[
\left(\sum_{i=j}^k e_i\right) \left(\sum_{i=j}^k e_i\right)^{\sf T},
\text{ where }  1 \leq j \leq k \leq n,
\]
are linearly independent and span the space of symmetric matrices
which shows that $Q_{\mathsf{A}_n}$ is $\cop$-perfect.
\end{proof}

\section{Algorithms}
\label{sec:algorithm}

In this section we show how one can solve the linear
program~\eqref{eqn:linear_program}.  Our algorithm is similar to the
simplex algorithm for linear programming. It walks along a path of
subsequently constructed $\cop$-perfect matrices, which are vertices
of the polyhedral set $\mathcal{R}$ that are connected by edges of~$\mathcal{R}$.

We start with a simple version assuming that the input matrix lies in
$\cpnrat$. Of course, this assumption can usually not been easily
checked beforehand and the rational cp-factorization is only given as
the output of the algorithm. In this sense, the algorithm gets the
promise that the input matrix possesses a rational cp-factorization. In
theoretical computer science promise problems are common; for
practical purposes we propose an extended procedure at the end of this
section, see Procedure~\ref{alg:new-extended-procedure}.

\begin{bigalg}   
\label{alg:new-voronoi-algorithm}
\begin{minipage}{14cm}
\begin{flushleft}
\textbf{Input:} $A\in \cpnrat$\\[0.2cm] 
\textbf{Output:} Rational cp-factorization of $A$.

\begin{enumerate}
\item[1.] Choose an initial $\cop$-perfect matrix  $P \in
  \mathcal{R}$; initialize $\mathcal{V}(P)$.\\

\item[2.] {\tt while} $A \not\in \mathcal{V}(P)$
  
\item[] \hspace*{0.5cm}(a) Determine a generator $R$ of an extreme ray of
  $(\MV(P))^{\ast}$ with $\langle  A,R \rangle < 0$.\\

\item[] \hspace*{0.5cm}(b) Use
  Algorithm~\ref{alg:determination-of-C-voronoi-neighbors} to
  determine the contiguous $\cop$-perfect matrix\\
  \hspace*{1cm} $N \leftarrow P + \lambda R$
  with $\lambda > 0$ and $\minC (N) = 1$. Compute $\mathcal{V}(N)$.

  \item[] \hspace*{0.5cm}(c) $P \leftarrow N$

\item[3.] Determine $\alpha_1, \ldots, \alpha_m \in \mathbb{Q}_{\geq 0}$ with
  $A = \sum_{i=1}^m \alpha_i v_iv_i^{\sf T}$ and output this rational
  cp-factorization.
\end{enumerate}
\end{flushleft}
\end{minipage}
\\[1ex]
{\sc Algorithm \arabic{alg}.} Algorithm to find
a rational cp-factorization 
\end{bigalg}

\subsection{Description and analysis of the algorithm}

In the following we describe the steps of
Algorithm~\ref{alg:new-voronoi-algorithm} in more detail:

\medskip

In \textbf{Step~1}, we can choose for instance the initial vertex
$P=\frac{1}{2}Q_{\mathsf{A}_n}$ of~${\mathcal R}$ with $Q_{\mathsf{A}_n}$ as
in~\eqref{eqn:initial_perfect_form}.  Then the algorithm subsequently
constructs vertices of ${\mathcal R}$.

\medskip

In \textbf{Step~2} we determine whether $A$ lies in the polyhedral
cone $\MV(P)$. For this we consider all $v\in \MinC(P)$ giving
generators $vv^{\sf T}$ of the polyhedral cone $\MV(P)$, respectively
defining linear inequalities of the dual cone $(\MV(P))^*$.  Testing
$A\in \MV(P)$ can then be done by solving an auxiliary linear program
\begin{equation}
\label{eq:auxiliarylp}
\min\left\{\langle A,Q \rangle : Q \in (\MV(P))^*\right\}.
\end{equation}
The minimum equals $0$ if and only if $A$ lies in $\MV(P)$.  If
$A \in \MV(P)$, then we can find non-negative coefficients
$\lambda_v$, with $v \in \MinC(P)$, to get a cp-factorization
\[
A = \sum_{v \in \MinC(P)} \lambda_v vv^{\sf T}.
\]
Using (an algorithmic version of) Carath\'eodory's theorem we can
choose in \textbf{Step~3} a subset
$\{v_1, \ldots, v_m\} \subseteq \MinC(P)$ so that we get a rational
cp-factorization $A = \sum_{i = 1}^m \alpha_i v_iv_i^{\sf T}$ with
non-negative rational numbers $\alpha_i$; see 
Section~\ref{ssec:linearprogram}.

\medskip

If the minimum of the auxiliary linear program \eqref{eq:auxiliarylp}
is negative we can find in \textbf{Step~2(a)} a generator $R$ of an
extreme ray of $(\MV(P))^{\ast}$ with $\langle A,R \rangle < 0$.
Here, several choices of $R$ with $\langle A,R \rangle < 0$ may be
possible and the performance depends on the choices made in this
``pivot step''.  A good heuristic for a ``pivot rule'' seems to be the
choice of~$R$ with $\langle A,R / \|R\| \rangle$ minimal, where
$\|R\|^2 = \langle R, R\rangle $.  Also a random choice of $R$ among
the extreme rays of $(\MV(P))^{\ast}$ with $\langle A,R \rangle < 0$
seems to perform quite well.  When choosing the right pivots~$R$ in
Step~2(a) Algorithm~\ref{alg:new-voronoi-algorithm} always terminates,
as shown by Theorem~\ref{thm:alg-works-rational-closure} below.

\medskip

In \textbf{Step~2(b)}
Algorithm~\ref{alg:determination-of-C-voronoi-neighbors}
(see Section~\ref{ssec:contiguous} is used to
determine a new {\em contiguous $\cop$-perfect matrix} $N$ of $P$ 
in direction of $R\not \in \copn$, that is, a contiguous vertex of
$P$ on $\mathcal{R}$, connected via an edge in direction $R$.  Note
that such a vertex exists (and $\mathcal{R}$ is not unbounded in the
direction of $R$) under the assumption $R \not \in \copn$, 
because $\mathcal{R} \subseteq \interior(\copn)$, 
see \cite[Lemma 2.3]{Dutour2017a}. 
We can exclude $R \in \copn$ here, since together with $\langle  A,R \rangle < 0$
it would contradict the promise $A\in \cpnrat$ on the input.
Note also that as a byproduct of
Algorithm~\ref{alg:determination-of-C-voronoi-neighbors} we compute
generators of the cone $\mathcal{V}(N)$.

\medskip

Finally, we observe that since $\langle A,R \rangle < 0$, we have
$\langle A,N \rangle < \langle A,P \rangle$ in each iteration
(Step~2) of the algorithm.

\medskip

The following theorem shows that we can set up an algorithm for the
promise problem.

\begin{theorem} 
\label{thm:alg-works-rational-closure}
For $A\in \cpnrat$, Algorithm~\ref{alg:new-voronoi-algorithm} with
suitable choices in Step~2(a) ends after finitely many iterations
giving a rational cp-factorization of $A$.

In particular, with breadth-first-search added to
Algorithm~\ref{alg:new-voronoi-algorithm}  we can guarantee finite
termination (but this of course would be far less efficient).
\end{theorem}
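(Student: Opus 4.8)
The plan is to establish finite termination by exhibiting a monotone quantity that cannot repeat, combined with a local-finiteness argument that rules out escape to infinity. First I would argue that the sequence of $\cop$-perfect matrices $P_0, P_1, P_2, \ldots$ produced by the algorithm all lie in a bounded region: since $\langle A, P_{k+1}\rangle < \langle A, P_k\rangle$ in every iteration (as observed just before the theorem, using $\langle A, R\rangle < 0$), and since each $P_k \in \mathcal{R}$, the whole path is contained in the set $\mathcal{P}(A, \lambda_0) = \{B \in \mathcal{R} : \langle A, B\rangle \le \langle A, P_0\rangle\}$. Here I would need $A \in \interior(\cpn)$ to invoke \cite[Lemma 2.4]{Dutour2017a} and conclude $\mathcal{P}(A,\lambda_0)$ is a polytope; for general $A \in \cpnrat$ one has to be a bit more careful, but the promise $A \in \cpnrat$ together with $\langle A, R\rangle < 0$ forcing $R \notin \copn$ (as noted in the description of Step~2(b)) keeps $\mathcal{R}$ bounded in the relevant directions, so the path stays in a bounded polyhedral piece of $\mathcal{R}$, which is a polytope by local finiteness.

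Next I would use that a polytope has only finitely many vertices, so the path visits only finitely many distinct $\cop$-perfect matrices (each $P_k$ being a vertex of $\mathcal{R}$, hence a vertex of this polytope). Combined with the strict monotonicity $\langle A, P_{k+1}\rangle < \langle A, P_k\rangle$, no vertex can be revisited, so the path cannot cycle; being a path of distinct vertices in a finite vertex set, it must terminate. At termination the {\tt while} loop exits precisely when $A \in \MV(P)$, and then Step~3 produces the rational cp-factorization via Carath\'eodory's theorem as explained in Section~\ref{ssec:linearprogram}. This handles the ``suitable choices in Step~2(a)'' clause: a pivot rule that guarantees we always have a well-defined next vertex (for instance, one that lexicographically resolves ties, or simply the rule of always moving to a contiguous vertex, which exists by \cite[Lemma 2.3]{Dutour2017a} since $R \notin \copn$) suffices, because monotonicity alone then prevents repetition.

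For the ``in particular'' claim about breadth-first-search, I would argue that one need not be clever about pivots at all: run the algorithm as a search over the graph whose nodes are $\cop$-perfect matrices reachable from $P_0$ by edges of $\mathcal{R}$ in directions $R$ with $\langle A, R\rangle < 0$, exploring in breadth-first order. Since all such reachable vertices lie in the polytope $\mathcal{P}(A,\lambda_0)$ as above, there are only finitely many of them, so BFS terminates; and it must encounter a vertex $P$ with $A \in \MV(P)$ because the normal cones $\MV(P)$ of the vertices of $\mathcal{R}$ cover $\interior(\copn)^* \supseteq$ the part of $\cpnrat$ we care about — more precisely, $A$ lies in the normal cone of the minimizer of $\langle A, \cdot\rangle$ over $\mathcal{P}(A,\lambda_0)$, which is one of the vertices in our finite list, and it is connected to $P_0$ through the polytope's edge graph.

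The main obstacle I anticipate is the boundary case $A \in \cpnrat \setminus \interior(\cpn)$: the clean polytope statement \cite[Lemma 2.4]{Dutour2017a} is only asserted for $A$ in the interior, so one must verify that the truncated Ryshkov set $\{B \in \mathcal{R} : \langle A, B\rangle \le c\}$ is still bounded (hence a polytope, by local finiteness) when $A$ is merely in the rational completely positive cone. The key point to nail down is that the recession cone of $\mathcal{R}$ is contained in $\{B : \langle A, B\rangle > 0\} \cup \{0\}$ for such $A$ — equivalently that there is no nonzero copositive direction $R$ with $\langle A, R\rangle \le 0$ when $A \in \cpnrat \setminus\{0\}$, which is exactly the duality fact $\cpn = (\copn)^*$ applied carefully at the boundary (any $R \in \copn$ with $\langle A, R\rangle \le 0$ and $A \in \cpn$ forces $\langle A, R\rangle = 0$, and one must still rule out such a face-direction giving unbounded descent). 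Establishing this, and confirming the algorithm's promise-based exclusion $R \notin \copn$ propagates correctly, is where the real work lies; the monotonicity-plus-finiteness skeleton is then routine.
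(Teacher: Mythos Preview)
Your interior-case argument (strict monotonicity of $\langle A, P_k\rangle$ combined with finiteness of the vertex set of the polytope $\mathcal{P}(A,\lambda_0)$) is correct and is essentially what the paper invokes via \cite[Lemma 2.4]{Dutour2017a}.

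The boundary case, however, has a genuine gap, and your anticipated fix does not work. The set $\{B \in \mathcal{R} : \langle A, B\rangle \le c\}$ is \emph{unbounded} whenever $A \in \bd\cpn$: the recession cone of $\mathcal{R}$ is all of $\copn$, and any boundary point $A$ admits a nonzero $D \in \copn$ with $\langle A, D\rangle = 0$. Concretely, for $n=2$ and $A = e_1 e_1^{\sf T} \in \cprat^{\,2}$, the matrices $\mathrm{diag}(1,t)$ lie in $\mathcal{R}$ for all $t \ge 1$ while $\langle A,\mathrm{diag}(1,t)\rangle = 1$. So the ``key point'' you propose to nail down---that the recession cone of $\mathcal{R}$ is contained in $\{B : \langle A, B\rangle > 0\} \cup \{0\}$---is simply false for boundary $A$, and the finitely-many-vertices skeleton (and with it the BFS argument) collapses. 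Notice too that your argument nowhere uses the \emph{rationality} of the cp-factorization; yet the $\sqrt 2$ example in Section~\ref{sec:2dexample} shows the algorithm can run forever on an irrational boundary matrix in $\cptwo$, so rationality must enter the proof somewhere.

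The paper handles the boundary by a perturbation argument rather than a boundedness one. Because $A \in \cpnrat$, it lies in the relative interior of a face $\mathcal{F}$ spanned by rank-$1$ matrices $vv^{\sf T}$ with $v \in \Z^n_{\ge 0}$; such a face is contained in some Voronoi cone $\MV(P)$. Now any finite initial pivot sequence $R_1,\ldots,R_k$ with $\langle A, R_i\rangle < 0$ satisfies the same strict inequalities for all $Q$ in a small neighborhood of $A$, and that neighborhood contains points $Q \in \interior\MV(P) \subseteq \interior\cpn$. For such an interior $Q$ the identical pivot sequence is valid and must terminate finitely by the interior case, which forces the sequence for $A$ to be finite as well. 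This is precisely where rationality is used, and it is the step your outline is missing.
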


\begin{proof}
  For $A\in \interior(\cpn) \cap \cpnrat$ the assertion follows
  from Lemma~2.4 in \cite{Dutour2017a}.

  So let us assume $A\in \bd \cpn \cap \cpnrat$. Note that $\cpnrat$
  is tessellated into cones $\MV(P)$ of $\cop$-perfect matrices~$P$.
In fact, the convex hull of $D = \{xx^{\sf T} : x \in \Z^n_{\geq 0}, x\not=0 \}$ 
is a locally finite polyhedral set whose facets are in one-to-one correspondence 
with the $\cop$-perfect matrices (see \cite{Opgenorth2001}). 
For any $A\in \cpnrat$, the ray $\{ \lambda A : \lambda\geq 0\}$ meets 
(at least) one facet $F$ of $\conv D$ and 
$A$ is in $\MV(P) = \cone F$ of the corresponding $\cop$-perfect matrix $P$.

  Let $\{ R_1, R_2, \ldots \}$ be a possible sequence of generators of
  rays constructed in Step~2(a) of
  Algorithm~\ref{alg:new-voronoi-algorithm}.  For all of these generators,
  the inequality $\langle A, R_i\rangle < 0$ holds.  For $k$ such generators, the
  conditions $\langle Q, R_i\rangle < 0$ for $i=1,\ldots , k$ are not
  only satisfied for $Q=A$, but also for all $Q$ in an
  $\varepsilon$-neighborhood of $A$ (with a suitable $\varepsilon$ depending
  on $k$).  For any $k$, this neighborhood also contains points of
  $\interior(\MV(P)) \subseteq \interior(\cpn)$.  For these interior
  points~$Q$, however, Algorithm~\ref{alg:new-voronoi-algorithm}
  finishes after at most finitely many steps (when checking for
  $Q \in \MV(P)$ in Step~2).  Thus, for some finite number of
  suitable choices in Step~2(a), the algorithm also ends for~$A$.
\end{proof}

\subsection{Computing contiguous $\cop$-perfect matrices}
\label{ssec:contiguous}

Our algorithm for computing contiguous $\cop$-perfect matrices is
inspired by a corresponding algorithm for computing contiguous perfect
positive definite quadratic forms which is a subroutine in Voronoi's
classical algorithm. The following algorithm is similar to
\cite[Section 6, Erratum to algorithm of Section 2.3]{Dutour2007a} and
\cite{Woerden2018a}.

\begin{bigalg}   
\label{alg:determination-of-C-voronoi-neighbors}
\begin{flushleft}
\smallskip
\textbf{Input:} $\cop$-perfect matrix $P\in \mathcal{R}$, generator $R
\not \in \copn$ of extreme ray of the polyhedral cone $(\MV(P))^{\ast}$\\
\textbf{Output:} Contiguous vertex $N$
of $P$ on $\mathcal{R}$, connected via an edge in direction $R$,
i.e.\ 
\[
N = P + \lambda R \text{ with } \lambda>0,\; \minC (N) = 1,\; \MinC(N) \not \subseteq \MinC (P).
\]

\smallskip

\begin{enumerate}
\item[1.] $(l, u)  \leftarrow  (0,1)$\\

\item[2.] {\tt while} $P + u R \not\in \interior(\copn)$ or $\minC(P + u R) = 1$ {\tt do}\\
\hspace{2ex} {\tt if} $P + u R \not\in \interior(\copn)$ {\tt then} $u  \leftarrow  (l + u)/2$\\
\hspace{2ex} {\tt else} $(l,u)  \leftarrow  (u, 2u)$\\

\item[3.] $S  \leftarrow  \left\{ v \in \Z^n_{\geq 0} : (P+ u R)[v] < 1 \right\}$

\item[4.]  $\lambda \leftarrow \min \left\{ (1-P[v]) / R[v] :  
                           v \in S \right\},   \;
                   N \leftarrow P+\lambda R$

\end{enumerate}
\end{flushleft}
{\sc Algorithm \arabic{alg}.} 
Determination of a contiguous $\cop$-perfect matrix.
\end{bigalg}

Computationally the most involved parts of
Algorithm~\ref{alg:determination-of-C-voronoi-neighbors} are checking
if a matrix lies in the interior of the cone of copositive matrices,
and if so, computing its copositive minimum $\minC$ and all vectors
$\MinC$ attaining it. We discuss these tasks in
Sections~\ref{ssec:checking_copositivity} and~\ref{ssec:computing_minC}.

In the {\tt while} loop ({\bf Step 2}) 
of Algorithm~\ref{alg:determination-of-C-voronoi-neighbors}, lower and
upper bounds $l$ and $u$ for the desired value $\lambda$ are computed,
such that $P+ l R$ and $P + u R$ are lying in $\interior(\copn)$
satisfying
\[
\minC(P+ l R)=\minC(P) > \minC(P + u R).
\]
In other words, $P + lR$ lies on the edge $[P,N] \subseteq \mathcal{R}$, but $P
+ uR$ lies outside of $\mathcal{R}$.

The set $S$ in {\bf Step 3} contains all vectors
$v \in \mathbb{Z}^n_{\geq 0}$ defining a separating hyperplane
$\{X \in \mathcal{S}^n : \langle X, vv^{\sf T} \rangle = 1\}$,
separating $\mathcal{R}$ and $P + uR$.

If $v \in S$, then $R[v] < 0$ and
\[
(P+\lambda R)[v] = P[v] + \min_{w \in S} \left(\frac{1-P[w]}{R[w]}\right) R[v] \geq 1.
\]
If $v \not\in S$ and $R[v] \geq 0$, then clearly $(P + \lambda R)[v]
\geq 1$, since $\lambda\geq 0$. Finally, if $v \not \in S$ and $R[v] < 0$, then since
$\lambda \leq u$, we have
\[
(P+\lambda R)[v] \geq (P+u R)[v] \geq 1.
\]

Therefore, the choice of $\lambda$ in \textbf{Step~4} guarantees that
$P + \lambda R$ is the contiguous $\mathcal{COP}$-perfect matrix of
$P$.  We have $\minC(P + \lambda R) = 1$ but
$\MinC(P + \lambda R) \not\subseteq \MinC(P)$.

In practice the set $S$ in Step~3 is maybe too big for a complete
enumeration. In this case partial enumerations may help to pick
successively smaller $u$'s first, which are not necessarily equal to the
desired $\lambda$; see \cite{Woerden2018a}.

\subsection{Checking copositivity} 
\label{ssec:checking_copositivity}

From a complexity point of view, checking whether or not a given
symmetric matrix is copositive is known to be co-NP-complete by a
result of Murty and Kabadi~\cite{MurtyKabadi1987}.

Nevertheless, in our algorithms we need to check whether or not a
given symmetric matrix lies in the cone of copositive matrices
(Step~2(c) of Procedure~\ref{alg:new-extended-procedure}) or in its
interior (Step~2 of
Algorithm~\ref{alg:determination-of-C-voronoi-neighbors}).  This can
be checked by the following recursive characterization of Gaddum
\cite[Theorem 3.1~and~3.2]{Gaddum1958a}, which of course is not
computable in polynomial time: By
\[
\Delta = \left\{x \in \mathbb{R}^n : x \geq 0, \sum_{i=1}^n x_i = 1\right\}
\]
we denote the $(n-1)$-dimensional standard simplex in dimension~$n$. 
A matrix $B \in \mathcal{S}_n$ lies in $\copn$ (in $\interior(\copn)$) 
if and only if
every of its principal minors of size $(n-1)\times(n-1)$ lies in $\copn$
(in $\interior(\mathcal{COP}_{n-1})$) and the value
\begin{equation} \label{v-in-cop-characterization}
v = \max_{x \in \Delta} \min_{y \in \Delta} x^{\sf T} B y
= \min_{y \in \Delta} \max_{x \in \Delta} x^{\sf T} B y.
\end{equation}
of the two-player game with payoff matrix $B$ is non-negative
(strictly positive).

One can compute the value of $v$ in \eqref{v-in-cop-characterization}
by a linear program:
\[
v = \max\{\lambda : \lambda \in \mathbb{R}, y \in \Delta, By \geq \lambda e\},
\]
where $e = (1, \ldots, 1)^{\sf T}$ is the all-ones vector.

\subsection{Computing the copositive minimum}
\label{ssec:computing_minC}

Once we know that a given symmetric matrix $B$ lies in the interior of
the copositive cone (i.e.\ after Step~2 of
Algorithm~\ref{alg:determination-of-C-voronoi-neighbors}) we apply the
idea of simplex partitioning initially developed by Bundfuss and
D\"ur~\cite{Bundfuss2008a} to compute its copositive minimum
$\minC(B)$ and all vectors $\MinC(B)$ attaining it. Again we note that
this is not a polynomial time algorithm.

First we recall some facts and results from~\cite{Bundfuss2008a}.  A family
$\mathcal{P} = \{\Delta^1, \ldots, \Delta^m\}$ of simplices is called
a simplicial partitioning of the standard simplex $\Delta$ if
\[
\Delta = \bigcup_{i=1}^m \Delta^i \quad \text{with} \quad \interior(\Delta^i)
\cap \interior(\Delta^j) = \emptyset \quad \text{whenever } i \neq j.
\]
Let $v^k_1, \ldots, v^k_n$ be the vertices of simplex
$\Delta^k$. It is easy to verify that if a symmetric matrix
$B \in \mathcal{S}^n$ satisfies the strict inequalities
\begin{equation}
\label{eq:localcopcond}
(v^k_i)^{\sf T} B v^k_j > 0 \text{ for all } i, j = 1, \ldots, n,
\mbox{ and } k= 1, \ldots, m,
\end{equation}
then it lies in $\interior(\copn)$. Bundfuss and D\"ur \cite[Theorem
2]{Bundfuss2008a} proved the following converse: Suppose
$B \in \interior(\copn)$, then there exists an $\varepsilon > 0$ so that
for all finite simplex partitions
$\mathcal{P} = \{\Delta^1, \ldots, \Delta^m\}$ of $\Delta$, where the
diameter of every simplex $\Delta^k$ is at most $\varepsilon$, 
strict inequalities~\eqref{eq:localcopcond} hold. Here, the diameter of $\Delta^k$
is defined as $\max\{\|v^k_i - v^k_j\| : i, j = 1, \ldots, n\}$.

We assume now that $B \in \interior(\copn)$ and that we have a finite
simplex partition $\mathcal{P}$ so that~\eqref{eq:localcopcond}
holds. We furthermore assume that all the vertices $v^k_i$ have
rational coordinates. Such a simplex partition exists as shown by
Bundfuss and D\"ur \cite[Algorithm 2]{Bundfuss2008a}.

Each simplex $\Delta^k = \conv\{v^k_1, \ldots, v^k_n\}$ defines a
simplicial cone by $\cone\{v^k_1, \ldots, v^k_n\}$. From now on we
only work with the simplicial cones and not with the simplices any
more, so we may scale the rational $v^k_i$'s to have
integral coordinates. 

The goal is now to find all integer vectors $v$ in $\Delta^k$
which minimize $B[v]$. To do this we adapt the algorithm of Fincke and
Pohst \cite{Fincke1985a}, which solves the shortest lattice vector
problem. It is the corresponding problem for positive semidefinite
matrices. The adapted algorithm will solve the
following problem: Given a matrix $B \in \interior(\cpn)$ and a
simplicial cone, which is generated by integer vectors
$v_1, \ldots, v_n$ so that $v_i^{\sf T} B v_j \geq 0$ holds, and
given a positive constant $M$, find all integer vectors $v$
in the cone so that $B[v] \leq M$ holds. Then by reducing $M$
successively to $B[v]$, whenever such a non-trivial integer vector~$v$
is found,  we can find the copositive minimum of $B$ in the simplicial
cone, as well as all integer vectors attaining it.

The first step of the algorithm is to compute the Hermite normal form
of the matrix $V$ which contains the the vectors $v_1, \ldots, v_n$ as
it columns.  (see for example Kannan and Bachem~\cite{Kannan1979a} or
Schrijver~\cite{Schrijver1986a}, where it is shown that computing the
Hermite normal form can be done in polynomial time).  We find a
unimodular matrix $U \in \mathsf{GL}_n(\mathbb{Z})$ such that $UV = W$
holds, where $W$ is an upper triangular matrix with columns
$w_1, \ldots, w_n$ and coefficients $W_{i,j}$. Note that the diagonal
coefficients of $W$ are not zero since $W$ has full rank. Moreover,
denoting the matrix $(U^{-1})^{\sf T} B U^{-1}$ by $B'$ we have for
all $i, j$
\begin{equation}\label{eq:vBvpositive}
0\leq  v_i^{\sf T} B v_j 
= w_i^{\sf T} (U^{-1})^{\sf T} B U^{-1} w_j 
= w_i B' w_j 
,
\end{equation}
where the inequality is strict for whenever $i = j$.

We want to find all vectors
$v \in \cone\{v_1, \ldots, v_n\} \cap \mathbb{Z}^n$ so that
$B[v] \leq M$. In other words, the goal is to find all rational
coefficients $\alpha_1, \ldots, \alpha_n$ satisfying the following three
properties:
\begin{enumerate}
\item[(i)] $\alpha_1, \ldots, \alpha_n \geq 0$,
\item[(ii)] $\sum_{i=1}^n \alpha_i v_i \in \mathbb{Z}^n$,
\item[(iii)] $B\left[\sum_{i=1}^n \alpha_i v_i\right] \leq M$.
\end{enumerate}
Since matrix $U$ lies in $\mathsf{GL}_n(\mathbb{Z})$, a vector $\sum_{i=1}^n
\alpha_i v_i$ is integral if and only if $\sum_{i=1}^n
\alpha_i w_i$ is integral. Looking at the last vector componentwise we have
\[
\sum_{i=1}^n
\alpha_i w_i = \left(\sum_{j=1}^n \alpha_j W_{1,j}, \sum_{j=2}^n
  \alpha_j W_{2,j}, \ldots, \alpha_{n-1} W_{n-1,n-1} + \alpha_n
  W_{n-1,n}, \alpha_n W_{n,n} \right).
\]
We first consider the possible values of the last coefficient
$\alpha_n$, and then continue to other coefficients
$\alpha_{n-1}, \ldots, \alpha_1$, one by one via a backtracking
search. Conditions (i) and (ii) imply that
\[
\alpha_n \in \{k/W_{n,n} : k = 0, 1, 2, \ldots \}.
\]
Condition (iii) gives an upper bound for $\alpha_n$: Write $\alpha =
(\alpha_1, \ldots, \alpha_n)^{\sf T}$, then
\[
M \geq (V\alpha)^{\sf T} B V\alpha = \alpha^{\sf T} W^{\sf T} B' W \alpha
= B'\left[\sum_{i=1}^n \alpha_i w_i\right] \geq B'[\alpha_n w_n] = \alpha^2_n B'[w_n],
\]
where the last inequality follows from~\eqref{eq:vBvpositive}. Hence,
$\alpha_n \leq \sqrt{M/B'[w_n]}$ and so
\[
\alpha_n \in \left\{k/W_{n,n} : k = 0, 1, \ldots, \left\lfloor
    \sqrt{M/B'[w_n]} \right\rfloor W_{n,n}\right\}.
\]

Now suppose $\alpha_n$ is fixed. We want to compute all possible
values of the coefficient $\alpha_{n-1}$. Then the second but last
coefficient  $\alpha_{n-1} W_{n-1,n-1} + \alpha_n W_{n-1,n}$ should be
integral and $\alpha_{n-1}$ should be non-negative. Thus,
\[
\alpha_{n-1} \in \left\{ (k - \alpha_n W_{n-1,n})/W_{n-1,n-1} : k =
  \lceil \alpha_n W_{n-1,n} \rceil,  \lceil \alpha_n W_{n-1,n} \rceil + 1,
\ldots \right\}.
\]
Again we use condition (iii) to get an upper bound for $\alpha_{n-1}$:
\[
\begin{split}
& M \geq B'\left[\sum_{i=1}^n \alpha_i w_i\right] \geq B'[\alpha_{n-1}
w_{n-1} + \alpha_n w_n]\\
& \qquad  = \alpha_{n-1}^2 B'[w_{n-1}] +
2\alpha_{n-1}\alpha_n w_{n-1}^{\sf T} B' w_{n} + \alpha_n^2 B'[w_n],
\end{split}
\]
and solving the corresponding quadratic equation gives the desired
upper bound. 

Now suppose $\alpha_n$ and $\alpha_{n-1}$ are fixed. We want to compute all possible
values of the coefficient $\alpha_{n-2}$ and we can proceed inductively.

\subsection{Modifying the algorithm for general input}

In this section we discuss an adaption of
Algorithm~\ref{alg:new-voronoi-algorithm} for general symmetric
matrices $A$ as input. If $A$ is not in $\cpn$ then the procedure ends
with a separating witness matrix~$W$ if it terminates.  However, we
currently do not know if our
Procedure~\ref{alg:new-extended-procedure} always terminates in this
case (cf. Conjecture~\ref{conj:alg-works-outside}).

\begin{bigalg}   
\label{alg:new-extended-procedure}
\begin{minipage}{14cm}
\begin{flushleft}
\textbf{Input:} Rational symmetric matrix $A$\\[0.2cm] 
\textbf{Output:} If the procedure terminates: If $A \in \cpnrat$, then a rational
cp-factorization of $A$. If $A \not\in \cpn$ then a matrix $W \in
\cop^n$ with $\langle W, A \rangle < 0$.

\begin{enumerate}
\item[1.] Choose an initial $\cop$-perfect matrix  $P \in
  \mathcal{R}$; initialize $\mathcal{V}(P)$.\\

\item[2.] {\tt while} $A \not\in \mathcal{V}(P)$

\item[] \hspace*{0.5cm}(a)  {\tt if} $\langle P, A\rangle <0$ {\tt then} {\tt output}
  $A\not\in \cpn$ (with witness $W=P$)\\
  
\item[] \hspace*{0.5cm}(b) Determine a generator $R$ of an extreme ray of
  $(\MV(P))^{\ast}$ with $\langle  A,R \rangle < 0$.\\

\item[] \hspace*{0.5cm}(c) {\tt if} $R\in \copn$ {\tt then} {\tt output} 
  $A \not\in \cpn$  (with witness $W=R$)\\

\item[] \hspace*{0.5cm}(d) Use
  Algorithm~\ref{alg:determination-of-C-voronoi-neighbors} to
  determine the contiguous $\cop$-perfect matrix\\
  \hspace*{1cm} $N \leftarrow P + \lambda R$
  with $\lambda > 0$ and $\minC (N) = 1$. Compute $\mathcal{V}(N)$.

  \item[] \hspace*{0.5cm}(e) $P \leftarrow N$
    
\item[3.] Determine $\alpha_1, \ldots, \alpha_m \in \mathbb{Q}_{\geq 0}$ with
  $A = \sum_{i=1}^m \alpha_i v_iv_i^{\sf T}$ and output this rational
  cp-factorization.
\end{enumerate}
\end{flushleft}
\end{minipage}
\\[1ex]
{\sc Procedure \arabic{alg}.} Procedure for general input
\end{bigalg}

The difference between Algorithm~\ref{alg:new-voronoi-algorithm}
and Procedure~\ref{alg:new-extended-procedure} is in the new steps 
2(a) and 2(c). Here it is tested, whether or not we can already certify 
that the input matrix $A$ is not in $\cpn$.

\medskip

In \textbf{Step~2(a)} we check whether or not the current $\cop$-perfect
matrix~$P$ is already a separating witness. By this, the algorithm
subsequently constructs an outer approximation of the $\cpn$ cone:
$$
\cpn \subseteq
\{ Q\in \sn : 
\langle Q, B \rangle \geq 0 
\mbox{ for all constructed vertices $B$ of $\mathcal{R}$ } 
\}
$$
This procedure gives a tighter and tighter outer approximation of the
completely positive cone (cf. Theorem~\ref{thm:outer-approx}).

\medskip

In \textbf{Step~2(c)} it is checked whether or not $R$ is a separating
witness for $A$, that is, if not only $\langle A,R \rangle < 0$ but
also $R\in \copn$ holds. The copositivity test of $R$ can be realized as
explained in Section~\ref{ssec:checking_copositivity}.

\medskip

For the case of $A\not\in \cpn$, we do not know if it is possible that
Procedure~\ref{alg:new-extended-procedure} does not provide a
separating witness $W$ after finitely many iterations.  With a
suitably chosen rule in Step~2(b), however, we conjecture that the
computation finishes with a certificate:
 
\begin{conj}
\label{conj:alg-works-outside}
For $A\not\in \cpn$, Procedure~\ref{alg:new-extended-procedure} with a
suitable ``pivot rule'' in Step~2(b) ends after finitely many iterations
with a separating witness~$W$.
\end{conj}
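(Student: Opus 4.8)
\emph{A possible line of proof.} Note first what is already clear. Since $\MV(P)=\cone\{vv^{\sf T}:v\in\MinC P\}\subseteq\cpn$, the test $A\in\MV(P)$ can never succeed when $A\notin\cpn$, so the \texttt{while} loop of Procedure~\ref{alg:new-extended-procedure} is left only through Step~2(a) or Step~2(c); the task is to show one of these exits is reached. Each iteration strictly decreases $\langle A,P\rangle$, so the procedure performs a simplex-type descent of the linear functional $\langle A,\cdot\rangle$ over the locally finite polyhedron $\mathcal{R}$. Because $A\notin\cpn=(\cone\mathcal{R})^{*}$ there is an $X_{0}\in\interior\copn=\cone\mathcal{R}$ with $\langle A,X_{0}\rangle<0$, and $tX_{0}\in\mathcal{R}$ for all large $t>0$ by Lemma~\ref{lem:mincop}, so $\langle A,\cdot\rangle$ is unbounded below on $\mathcal{R}$; moreover the recession cone of $\mathcal{R}$ equals $\copn$ (its elements $D$ are exactly those with $D[v]\ge 0$ for all $v\in\Z^n_{\geq 0}$). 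Hence a copositive $R$ with $\langle A,R\rangle<0$ (Step~2(c)) and a vertex $P$ with $\langle A,P\rangle<0$ (Step~2(a)) are precisely the two kinds of finite certificate of unboundedness for this program, and by Theorem~\ref{thm:outer-approx} such a certificate already occurs among the vertices and the extreme-ray generators of $\mathcal{R}$. So the conjecture is exactly the statement that a suitable pivot rule drives the descent to such a certificate in finitely many iterations.

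The plan would be to reduce this to a \emph{boundedness} statement for the walk. Write $P_{0},P_{1},\dots$ for the visited vertices and $R_{i}$ for the pivot direction with $P_{i+1}=P_{i}+\lambda_{i}R_{i}$, $\lambda_{i}>0$. Because $\langle A,P_{i}\rangle$ strictly decreases, no vertex is visited twice, and because $\mathcal{R}$ is locally finite, every bounded subset of $\sn$ contains only finitely many vertices of $\mathcal{R}$. Hence it suffices to prove that the sequence $(P_{i})$ stays in a bounded region of $\sn$: boundedness together with ``no repeats'' forces the walk to be finite, i.e.\ Step~2(a) or Step~2(c) eventually fires. If the procedure ran forever without a witness, then $\langle A,P_{i}\rangle\ge 0$ for all $i$, and the telescoping identity $\langle A,P_{i+1}\rangle-\langle A,P_{i}\rangle=\lambda_{i}\langle A,R_{i}\rangle$ gives $\sum_{i}\lambda_{i}|\langle A,R_{i}\rangle|<\infty$. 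With the ``steepest'' pivot rule --- choosing among the extreme-ray generators $R$ of $(\MV(P))^{*}$ with $\langle A,R\rangle<0$ one that minimizes $\langle A,R/\|R\|\rangle$ --- one would then hope for a uniform bound $|\langle A,R_{i}/\|R_{i}\|\rangle|\ge\delta>0$; it would make the step lengths $\|P_{i+1}-P_{i}\|=\lambda_{i}\|R_{i}\|\le\delta^{-1}\lambda_{i}|\langle A,R_{i}\rangle|$ summable, so $(P_{i})$ would be Cauchy, hence bounded, and we would be done.

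The hard part is exactly this uniform lower bound $\delta$, and it is why the statement is only a conjecture. The danger is geometric: an infinite walk can escape to infinity while approaching $\bd\copn\setminus\mathcal{R}$, where, by Lemma~\ref{lem:mincop} and \eqref{eqn:coneR}, the set $\mathcal{R}$ degenerates and the cones $\MV(P)$ --- hence the directions $R$ available to the pivot --- may become arbitrarily close to $\langle A,\cdot\rangle$-orthogonal, forcing $|\langle A,R/\|R\|\rangle|\to 0$. Excluding this seems to require a finer analysis of the face structure of $\mathcal{R}$ near $\bd\copn$ and of how the chosen pivot rule interacts with it.

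A complementary attack, patterned on the proof of Theorem~\ref{thm:alg-works-rational-closure}, would fix the finite pivot sequence $R_{1},\dots,R_{k}$ --- the strict inequalities $\langle A,R_{i}\rangle<0$ persisting on an $\varepsilon$-neighborhood of $A$ --- and try to populate that neighborhood with matrices on which termination with a witness is already known, so that the same pivot sequence terminates $A$. For $A\notin\cpn$, however, every nearby matrix one could use is again outside $\cpn$, so this is circular unless one first establishes termination for some base family of matrices outside $\cpn$ (for instance those already separated by a vertex of the initial perfect matrix, which terminate at once) and then bootstraps along these neighborhoods. Whether either route can be completed, and whether Opgenorth's framework~\cite{Opgenorth2001} applies to the present unbounded situation, we have not been able to determine.
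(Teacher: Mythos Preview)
The statement you are addressing is a \emph{conjecture}: the paper explicitly does not prove it and says so directly before stating it (``we do not know if it is possible that Procedure~\ref{alg:new-extended-procedure} does not provide a separating witness $W$ after finitely many iterations''). There is therefore no ``paper's own proof'' to compare your proposal against. The only partial evidence the paper offers is the $n=2$ discussion in Section~\ref{sec:2dexample}, where termination for $A\notin\cptwo$ is asserted case by case but not derived from any general mechanism.

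Your write-up is appropriately calibrated: you label it ``a possible line of proof,'' you isolate the reduction to boundedness of the vertex sequence $(P_i)$, and you correctly identify the missing ingredient as a uniform lower bound $\delta>0$ on $|\langle A,R_i/\|R_i\|\rangle|$ under the steepest-descent rule. The structural observations you use along the way---$\MV(P)\subseteq\cpn$ so the \texttt{while} condition cannot fail, strict monotone decrease of $\langle A,P_i\rangle$, the recession cone of $\mathcal{R}$ being $\copn$, and local finiteness of $\mathcal{R}$ forcing only finitely many vertices in any bounded region---are all correct and are exactly the tools one would expect to deploy. Your second approach, mimicking the neighborhood argument of Theorem~\ref{thm:alg-works-rational-closure}, is also a natural thing to try, and your diagnosis of why it becomes circular for $A\notin\cpn$ is accurate.

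In short: you have not proved the conjecture, but neither has the paper, and your proposal is an honest and well-structured outline of where the difficulty lies rather than a flawed attempt at a proof.
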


We close this subsection with a few observations that can be made in
the remaining ``non-rational boundary cases'', that is, for
$A \in \bd \cpn \setminus \cpnrat$.  In this case,
Procedure~\ref{alg:new-extended-procedure} may not terminate after
finitely many steps, as shown in a $2$-dimensional example in the
following section.  Assuming there is an infinite sequence of vertices
$P^{(i)}$ of ${\mathcal R}$ constructed in
Procedure~\ref{alg:new-extended-procedure}, we know however at least
the following:

\begin{enumerate}
\item[(i)] The $\cop$-perfect matrix $P^{(i)}$ is in
  $ \{ B\in \copn : \langle P^{(i-1)}, A\rangle > \langle B,
  A\rangle \geq 0 \}.  $
\item[(ii)] The norms $\|P^{(i)} \|$ are unbounded. Otherwise --
  following the arguments in the proof of Lemma~2.4 in
  \cite{Dutour2017a} -- we could construct a convergent subsequence
  with limit $P \in {\mathcal R}$, for which we could then find a
  $u\in \R^n_{\geq 0}$ of norm $\|u\|=1$ with $P[u]=0$ (contradicting
  $P\in \interior(\copn)$).
\item[(iii)] $P^{(i)} / \|P^{(i)}\|$ contains a convergent
  subsequence with limit
  $P \in \{ X \in \sn : \langle X, A \rangle = 0\} $.  It can be shown
  that this $P$ is in $\bd \copn$.  Infinite sequences of vertices
  $P^{(i)}$ of ${\mathcal R}$ with such a limit $P$ exist.  For
  $n=2$ we give an example in Section~\ref{sec:2dexample}, in which
  $A$ is from the ``irrational boundary part''
  $(\bd \cpn ) \setminus \cpnrat$.
\end{enumerate}

\section{A $2$-dimensional example}
\label{sec:2dexample}

In this section we demonstrate how
Algorithm~\ref{alg:new-voronoi-algorithm} respectively
Procedure~\ref{alg:new-extended-procedure} 
works for $n = 2$. Thereby,
we discover a relation to beautiful classical results
in elementary number theory. In particular, we consider the case when
the input matrix~$A$ lies on the boundary of $\cptwo$, see Figure~\ref{fig:cp2}.

\begin{figure}
\unitlength1cm
\begin{picture}(12,5,12)
\put(2,0){\includegraphics[width=8cm]{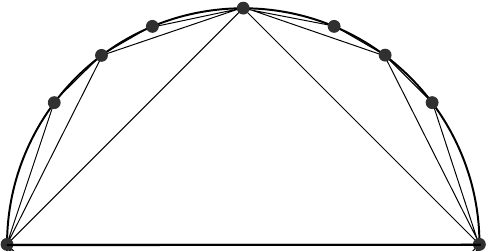}}
\put(5.7,4.3){$1,1$}
\put(4.0,4.0){$2,3$}
\put(2.9,3.4){$1,2$}
\put(2.1,2.5){$1,3$}
\put(1.3,0){$0,1$}
\put(10.1,0){$1,0$}
\put(9.3,2.5){$3,1$}
\put(8.5,3.4){$2,1$}
\put(7.4,4.0){$3,2$}
\end{picture}
\caption{Subdivision of $\cptwo$ by Voronoi cones $\mathcal{V}(P)$.
  Matrices $A = (a_{ij})$ are drawn with $2$-dimensional coordinates
  \[
(x,y) = \frac{1}{a_{11}+a_{22}} (a_{11}-a_{22}, a_{12}).
\]
  Integers $\alpha,\beta$ indicate that the shown point is on a ray
  spanned by the rank-$1$ matrix $A=v v^{\sf T}$ with
  $v=( \alpha , \beta )^{\sf T}$.  }
\label{fig:cp2}
\end{figure}

\subsection{Input on the boundary}

The boundary of $\cptwo$ splits into a part of diagonal matrices
\[
A = \begin{pmatrix} \alpha & 0 \\ 0 & \beta \end{pmatrix}
\quad \text{with} \quad \alpha, \beta \geq 0
\]
and into rank-$1$ matrices $A=xx^{\sf T}$.  In the first case,
Procedure~\ref{alg:new-extended-procedure} finishes already in its
first iteration, if we use $Q_{\mathsf{A}_2}$ as a starting perfect
matrix,\footnote{Strictly speaking we should use
  $\frac{1}{2}Q_{\mathsf{A}_2}$ here. If we use $Q_{\mathsf{A}_2}$
  instead, then the algorithm produces integral matrices and vertices
  of $2\mathcal{R}$.} where
\begin{equation}
\label{eq:start}
Q_{\mathsf{A}_2} = 
\begin{pmatrix} 2 & -1 \\ -1 & 2
\end{pmatrix}
\quad
\text{and}
\quad
\MinC (Q_{\mathsf{A}_2}) =
\left\{ 
\begin{pmatrix} 1 \\ 0 \end{pmatrix} ,
\begin{pmatrix} 0 \\ 1 \end{pmatrix} ,
\begin{pmatrix} 1 \\ 1 \end{pmatrix} 
\right\}.
\end{equation}

Let us consider the other boundary cases for $n=2$, where
$A = xx^{\sf T}$ is a rank-$1$ matrix.  Without loss of generality we
can assume that $x = (\alpha, 1)^{\sf T}$.  As we explain in the
following, Procedure~\ref{alg:new-extended-procedure} will terminate
after finitely many iterations with a $\cop$-perfect matrix $P$
satisfying $x \in \MinC P$ when $\alpha$ is rational.  For irrational
$\alpha$ the procedure will not terminate.

The first observation is that Procedure~\ref{alg:new-extended-procedure} 
subsequently replaces a $\cop$-perfect matrix $P$ by a contiguous
$\cop$-perfect matrix $N$ in a way that one of the three vectors in
$\MinC (P)$ is replaced by the sum of the remaining two. Let $P$
be a copositive matrix with
\begin{equation}
\label{eq:mincopbp}
\MinC P = \left\{ 
\begin{pmatrix} a \\ b \end{pmatrix} ,
\begin{pmatrix} c \\ d \end{pmatrix} ,
\begin{pmatrix} e \\ f \end{pmatrix} 
\right\}.
\end{equation}
It is known (see for example \cite [Section ``Determinants Determine
Edges'']{Hatcher2017a}) that
$\det\left(\begin{smallmatrix} a & c\\ b & d\end{smallmatrix}\right) =
\pm 1$ and we get a contiguous $\cop$-perfect matrix $N$ with
\[
\MinC N = \left\{
\begin{pmatrix} a \\ b \end{pmatrix} ,
\begin{pmatrix} c \\ d \end{pmatrix} ,
\begin{pmatrix} a+c  \\ b+d \end{pmatrix}
\right\}
\neq \MinC P
\]
by
\[
N = P + 4
\begin{pmatrix}
bd & -\frac{1}{2}(ad + bc)\\ 
-\frac{1}{2}(ad + bc) & ac
\end{pmatrix}.
\]
For instance, starting with $P = Q_{\mathsf{A}_2}$ as in~\eqref{eq:start}
\[
\begin{pmatrix} 1 \\ 0 \end{pmatrix} \text{  is replaced by }
\begin{pmatrix} 1 \\ 2 \end{pmatrix} \text{ if } \alpha < 1
\text{ yielding } 
N =
\begin{pmatrix}
6 & -3\\
-3 & 2
\end{pmatrix},
\]
or
\[
\begin{pmatrix} 0 \\ 1 \end{pmatrix} \text{ is replaced by }
\begin{pmatrix} 2 \\ 1 \end{pmatrix} \text{ if } \alpha >  1
\text{ yielding } 
N =
\begin{pmatrix}
2 & -3\\
-3 & 6
\end{pmatrix}.
\]
Note that for $\alpha =1$, Algorithm~\ref{alg:new-voronoi-algorithm}
also finishes already in the first iteration.  The way these vectors
are constructed corresponds to the way the famous \emph{Farey
  sequence} is obtained. This relation between the Farey
diagram/sequence and quadratic forms was first investigated in a
classical paper of Adolf Hurwitz \cite{Hurwitz1894a} in 1894 inspired by
a lecture of Felix Klein; see also the book by 
Hatcher~\cite{Hatcher2017a}, which contains the proofs.

For concreteness, let us choose $\alpha = \sqrt{2}$. Then $\MinC (P)$
is changed by replacing a suitable vector subsequently with
\[
\begin{pmatrix} 2 \\ 1 \end{pmatrix} ,
\begin{pmatrix} 3 \\ 2 \end{pmatrix} ,
\begin{pmatrix} 4 \\ 3 \end{pmatrix} ,
\begin{pmatrix} 7 \\ 5 \end{pmatrix} ,
\begin{pmatrix} 10 \\ 7 \end{pmatrix} ,
\begin{pmatrix} 17 \\ 12 \end{pmatrix} ,
\begin{pmatrix} 24 \\ 17 \end{pmatrix} ,
\begin{pmatrix} 41 \\ 29 \end{pmatrix} ,
\begin{pmatrix} 58 \\ 41 \end{pmatrix} ,
\begin{pmatrix} 99 \\ 70 \end{pmatrix} ,
\ldots
\]
Note that there is always a unique choice in 
Step~2(b) of Procedure~\ref{alg:new-extended-procedure} 
in case $A$ is a
$2\times 2$ rank-$1$ matrix. Note also that the vectors represent
fractions that converge to $\sqrt 2$.  Every second vector corresponds
to a convergent of the continued fraction expansion of $\sqrt{2}$: We have
\[
\sqrt{2} = 1 + \cfrac{1}{2 +\cfrac{1}{2+\cfrac{1}{2 + \cfrac{1}{2 +
        \cfrac{1}{2 + \ddots}}}}}
\]
and
\[
  3/2 = 1 + \frac{1}{2}, \;
  7/5 = 1 + \cfrac{1}{2 + \cfrac{1}{2}}, \; \ldots, \;
99/70 = 
1 + \cfrac{1}{2 +\cfrac{1}{2+\cfrac{1}{2 + \cfrac{1}{2 +
        \cfrac{1}{2}}}}}, \; \ldots
\]
The $\cop$-perfect matrix after ten iterations of the algorithm is
\[
P^{(10)}=
\begin{pmatrix}
4756 & -6726 \\
-6726 & 9512
\end{pmatrix}.
\]
It can be shown that the matrices $P^{(i)}$ converge to a multiple
of
\[
B =
\begin{pmatrix}
1 & -\sqrt{2} \\
-\sqrt{2} & 2
\end{pmatrix}
\text{ satisfying }
\langle A,B\rangle = 0 
\text{ and }
\langle X,B \rangle \geq 0 \text{ for all } X \in \cptwo.  
\]
However, every one of the infinitely many perfect matrices $P^{(i)}$
satisfies
\[
\langle X,P^{(i)}\rangle > 0 \text{ for all } X \in \cptwo.
\]

\subsection{Input outside}

In case the input matrix $A=(a_{ij})$ is outside of $\cptwo$ 
we distinguish two cases using the starting $\cop$-perfect matrix $Q_{\mathsf{A}_2}$:
If $a_{12} = a_{21} < 0$ then 
Procedure~\ref{alg:new-extended-procedure} finishes already in its
first iteration (in Step~2(c)) with a separating witness
$$
W = R = 
\begin{pmatrix}
0 & 1 \\
1 & 0
\end{pmatrix}
.
$$
If $a_{12} = a_{21} \geq 0$,
Procedure~\ref{alg:new-extended-procedure} 
terminates after finitely many iterations (in Step~2(a))  
with a separating $\cop$-perfect witness matrix $W = P$.

We additionally note that it is a special feature of the $n=2$ case
that we can conclude that the input matrix $A$ is outside of $\cptwo$
if we have a choice between two possible $R$ with
$\langle A,R \rangle < 0$ in Step~2(b) of
Procedure~\ref{alg:new-extended-procedure}.

\subsection{Integral input}

Laffey and \v{S}imgoc \cite{Laffey2018a} showed that every integral
matrix $A \in \cptwo$ possesses an integral cp-factorization. This can
also be seen as follows: If $P$ is a copositive matrix with 
$\MinC P$ as in \eqref{eq:mincopbp}
then the matrices
\[
\begin{pmatrix}
a\\
b
\end{pmatrix}
\begin{pmatrix}
a\\
b
\end{pmatrix}^{\sf T},
\begin{pmatrix}
c\\
d
\end{pmatrix}
\begin{pmatrix}
c\\
d
\end{pmatrix}^{\sf T},
\begin{pmatrix}
e\\
f
\end{pmatrix}
\begin{pmatrix}
e\\
f
\end{pmatrix}^{\sf T}
\]
form a Hilbert basis of the convex cone which they generate. This
means that every integral matrix in this cone is an integral
combination of the three matrices above. To show this, one immediately
verifies this fact in the special case of $P = Q_{{\sf A}_2}$. 
Then all the other cones are equivalent by conjugating with a matrix
in $\mathsf{GL}_2(\mathbb{Z})$.

\section{Computational Experiments}
\label{sec:experiments}

We implemented our algorithm. The source code, written in C++, is
available on {\tt GitHub} \cite{github}.  In this section we report on
the performance on several examples, most of them previously discussed
in the literature.  Generally, the running time of the procedure is
hard to predict.  The number of necessary iterations in
Algorithm~\ref{alg:new-voronoi-algorithm} respectively
Procedure~\ref{alg:new-extended-procedure} drastically varies in the
considered examples.  Most of the computational time is taken by the
computation of the copositive minimum as described in
Section~\ref{ssec:computing_minC}.

\subsection{Matrices in the interior}

For matrices in the interior of the completely positive cone, our
algorithm terminates with a certificate in form of a cp-factorization.
Note that in \cite{DuerStill2008} and in 
\cite{Dickinson2010a} characterizations of matrices in
the interior of the completely positive cone are given.
For example, we have that $A \in \interior(\cpn)$ if and only if $A$
has a factorization $A = BB^{\sf T}$ with $B > 0$ and $\rank B = n$.

The matrix 
\[
\begin{pmatrix}
 6  &  7  &  8  &  9  &  10  &  11\\
 7  &  9  &  10  &  11  &  12  &  13\\
 8  &  10  &  12  &  13  &  14  &  15\\
 9  &  11  &  13  &  15  &  16  &  17\\
 10  &  12  &  14  &  16  &  18  &  19\\
 11  &  13  &  15  &  17  &  19  &  21
\end{pmatrix}
\] 
for example lies in the interior of $\mathcal{CP}_6$, 
as it has a cp-factorization with vectors
$(1, 1, 1, 1, 1, 1)$, 
$(1, 1, 1, 1, 1, 2)$, 
$(1, 1, 1, 1, 2, 2)$, 
$(1, 1, 1, 2, 2, 2)$, 
$(1, 1, 2, 2, 2, 2)$ and
$(1, 2, 2, 2, 2, 2)$. 
It is found after 8~iterations of our algorithm.

\subsection{Matrices on the boundary}

For matrices in $\cpnrat$ there exists a cp-factorization by
definition. However, on the boundary of the cone these are often
difficult to find.

The following example is from \cite{Groetzner2018a} and lies in the boundary of $\cprat^5$: 
\[
\begin{pmatrix}
8 & 5 & 1 & 1 & 5\\
5 & 8 & 5 & 1 & 1\\
1 & 5 & 8 & 5 & 1\\
1 & 1 & 5 & 8 & 5\\
5 & 1 & 1 & 5 & 8\\
\end{pmatrix}
\]
Starting from $Q_{{\mathsf{A}_5}}$ our algorithm needs 5~iterations to
find the cp-factorization with the ten vectors $(0, 0, 0, 1, 1)$,
$(0, 0, 1, 1, 0)$, $(0, 0, 1, 2, 1)$, $(0, 1, 1, 0, 0)$,
$(0, 1, 2, 1, 0)$, $(1, 0, 0, 0, 1)$, $(1, 0, 0, 1, 2)$,
$(1, 1, 0, 0, 0)$, $(1, 2, 1, 0, 0)$ and $(2, 1, 0, 0, 1)$.

While the above example can be solved within seconds on a standard
computer, the matrix 
\[
A = 
\begin{pmatrix}
41  &  43  &  80   &  56   &  50\\
43  &  62  &  89   &  78   &  51\\
80  &  89  &  162  &  120  &  93\\
56  &  78  &  120  &  104  &  62\\
50  &  51  &  93   &  62   &  65
\end{pmatrix}
\] 
from Example~7.2 in \cite{Groetzner2018a} took roughly 10~days and
70~iterations to find a factorization with only three vectors
$(3,5,8,8,2)$, $(4,1,7,2,5)$ and $(4,6,7,6,6)$.  The second algorithm
suggested in~\cite{Groetzner2018a} found the following approximate 
cp-factorization in 0.018 seconds
\[
A = \tilde{B}\tilde{B}^{\sf T}, \; \text{ with } \;
\tilde{B} =
\begin{pmatrix}
0.0000 & 3.3148 & 4.3615 & 3.3150 & 0.0000\\
0.0000 & 0.7261 & 4.3485 & 6.5241 & 0.0000\\
0.0000 & 4.5242 & 9.9675 & 6.4947 & 0.0000\\
0.0000 & 0.1361 & 7.4192 & 6.9955 & 0.0000\\
0.0000 & 5.3301 & 3.8960 & 4.6272 & 0.0000
\end{pmatrix}.
\]

We also considered the following family of completely positive
$(n+m)\times (n+m)$ matrices, generalizing the family of examples
considered in \cite{Jarre2009a}: The matrices
\[
\begin{pmatrix}
n \Id_m     & J_{m,n}\\
J_{n,m}    & m \Id_n
\end{pmatrix},
\]
with $J_{\cdot,\cdot}$ denoting an all-ones matrix of suitable size,
are known to have $cp$-rank $nm$, that is, 
they have a cp-factorization with $nm$ vectors, but not with less.
These factorizations are found by our algorithm
with starting $\cop$-perfect matrix $Q_{{\mathsf{A}_{m+n}}}$
for all $n,m\leq 3$ in less than 6~iterations.

\subsection{Matrices that are not completely positive}

For matrices that are not completely positive, our algorithm can find
a certificate in form of a witness matrix that is copositive.

The following example is taken from \cite[Example 6.2]{Nie2014a}. 
\[
A=
\begin{pmatrix}
1 & 1 & 0 & 0 & 1\\
1 & 2 & 1 & 0 & 0\\
0 & 1 & 2 & 1 & 0\\
0 & 0 & 1 & 2 & 1\\
1 & 0 & 0 & 1 & 6
\end{pmatrix}
\]
is positive semidefinite, but not completely positive.
Starting from $Q_{{\mathsf{A}_5}}$ our algorithm needs $18$~iterations to
find the copositive witness matrix
\[
B =
\begin{pmatrix}
 363/5 &-2126/35 & 2879/70 & 608/21 & -4519/210\\
 -2126/35 & 1787/35 & -347/10 & 1025/42 & 253/14\\
 2879/70 & -347/10 & 829/35 & -1748/105 & 371/30\\
 608/21 & 1025/42 & -1748/105 & 1237/105 & -601/70\\
 -4519/210 & 253/14 & 371/30 & -601/70 & 671/105
\end{pmatrix}
\]
with $\langle A, B \rangle = -2/5$, verifying $A \not\in \mathcal{CP}_5$.

\section*{Acknowledgement}

We like to thank Jeff Lagarias for a helpful discussion about Farey
sequences and Renaud Coulangeon for pointing out the link to the work
of Opgenorth. We also like to thank Valentin Dannenberg, 
Peter Dickinson and Veit Elser for helpful suggestions. 
Finally, we are very grateful to the referees for
carefully reading our paper and for their detailed comments. This
helped us to improve the presentation of our paper substantially.

\smallskip

This project has received funding from the European Union's Horizon
2020 research and innovation programme under the Marie
Sk\l{}odowska-Curie agreement No 764759
and it is based upon work supported by the National
Science Foundation under Grant No. DMS-1439786 while the authors were 
in residence at the Institute for Computational and Experimental
Research in Mathematics in Providence, RI, 
during the Spring 2018 semester.
Moreover, the first author was supported by the Humboldt foundation and the
  third author is partially supported by the
SFB/TRR 191 ``Symplectic Structures in Geometry,
Algebra and Dynamics'', funded by the DFG.

\end{document}